\numberwithin{equation}{section}
\newcommand{\qtq}[1]{\quad\text{#1}\quad}
\theoremstyle{definition}
\newtheorem{definition}{Definition}
\theoremstyle{plain}
\newtheorem{theorem}[definition]{Theorem}
\newtheorem*{theorem*}{Theorem}
\newtheorem{lemma}{Lemma}
\newtheorem*{claim*}{Claim}
\newtheorem{corollary}[definition]{Corollary}
\theoremstyle{remark}
\newtheorem*{remark*}{Remark}
\newcommand{\eps}{\varepsilon}
\DeclareMathOperator{\R}{\mathbb{R}}
\DeclareMathOperator{\C}{\mathbb{C}}
\DeclareMathOperator{\F}{\mathcal{F}}
\newcommand{\jbrak}[1]{\langle#1\rangle}
\begin{document}

\title[Dispersion-managed NLS]{Well-posedness and blowup for the dispersion-managed nonlinear Schr\"odinger equation}

\author[J. Murphy]{Jason Murphy}
\address{Department of Mathematics \& Statistics, Missouri S\&T}
\email{jason.murphy@mst.edu}
\author[T. Van Hoose]{Tim Van Hoose}
\address{Department of Mathematics \& Statistics, Missouri S\&T}
\email{trvkdb@mst.edu}

\begin{abstract}
We consider the nonlinear Schr\"odinger equation with periodic dispersion management.  We first establish global-in-time Strichartz estimates for the underlying linear equation with suitable dispersion maps.  As an application, we establish a small-data scattering result for the $3d$ cubic equation.  Finally, we use a virial argument to demonstrate the existence of blowup solutions for the $3d$ cubic equation with piecewise constant dispersion map. 
\end{abstract}

\maketitle

\section{Introduction}

We study the initial-value problem for a certain class of \emph{dispersion-managed} nonlinear Schr\"odinger equations (DMNLS).  In general, these equations take the form
\begin{equation}\label{DMNLS}
i\partial_t u + \gamma(t)\Delta u + |u|^p u =0,\quad (t,x)\in\R\times\R^d,
\end{equation}
where the dispersion map $\gamma$ is some time-periodic real-valued function.  Such equations arise naturally in the context of nonlinear fiber optics, where one often encounters the cubic case $(p=2)$ in one dimension $(d=1)$ with piecewise constant dispersion map $\gamma$ as in \eqref{def:gamma} below (see e.g. \cite{Agr, Kur}).  For some representative mathematical results in this case (as well as some other cases), we refer the reader to \cite{CL, CHL, EHL, GT1, GT2, HL, HL2, MV, PZ, ZGJT}.  We note that in many of these works, one does not study \eqref{DMNLS} directly, but instead averages over one period of the dispersion map and studies the resulting autonomous equation.

The authors of \cite{AntonelliSautSparber} initiated the study of the initial-value problem for \eqref{DMNLS} for general dimensions and powers $p>0$ with piecewise constant $1$-periodic dispersion map of the form
\begin{equation}\label{def:gamma}
\gamma(t) = \begin{cases} \gamma_+ & 0\leq t< t_+ \\ \gamma_- & t_+\leq t<1,\end{cases}
\end{equation}
with $\gamma_\pm>0$.  Their results included a local well-posedness theory in $H^1$ for energy-subcritical equations (i.e. $p<\tfrac{4}{d-2}$); a global well-posedness result in $L^2$ for the mass-subcritical case (i.e. $p<\tfrac{4}{d}$); and a sharp global well-posedness result in $H^1$ for the mass-critical case (i.e. $p=\tfrac{4}{d}$), including the existence of finite-time blowup solutions at the sharp threshold.  The local theory appearing in \cite{AntonelliSautSparber} relies on the use of local-in-time Strichartz estimates for the usual linear Schr\"odinger equation.  Combining this with mass conservation yields the global result in the mass-subcritical case.  In the mass-critical case, the authors rely on the sharp Gagliardo--Nirenberg inequality to establish the global well-posedness result, while their blowup result relies on the use of the pseudoconformal symmetry. 

Our goal in this work is to initiate the study of the global behavior of solutions in the intercritical setting (that is, for powers between the mass- and energy-critical exponents), where so far only the local behavior has been understood.  We focus on the model case of the $3d$ cubic equation, namely
\begin{equation}\label{nls}
\begin{cases}
i\partial_tu + \gamma(t)\Delta u + |u|^2 u = 0, \\ u|_{t=0}=u_0\in H^1(\R^3),
\end{cases}
\end{equation}
with $\gamma$ as in \eqref{def:gamma}, although most of what we do carries over to the general intercritical case (i.e. $\tfrac{4}{d}<p<\tfrac{4}{d-2}$) in a straightforward way. 

Our first result is a global-in-time Strichartz estimate for dispersion-managed Schr\"odinger equations (see Theorem~\ref{T:Strichartz}), which may be of more general interest than the specific applications given here.  The proof applies to a class of dispersion maps that is somewhat more general than the piecewise constant case (see Definition~\ref{def:admissible}).  The most essential restrictions are (i) non-vanishing average dispersion, i.e. $\langle \gamma\rangle :=\int_0^1\gamma(t)\,dt\neq 0$, and (ii) non-vanishing of the dispersion map itself (e.g. $\gamma^{-1}\in L_t^\infty$).  A related NLS model in which the dispersion map itself vanishes at some points was considered in \cite{Fanelli}; it is possible that in this case, one could recover some Lorentz-space modified Strichartz estimates, although we did not pursue that direction here.  

As an application of the global Strichartz estimates, we establish a global well-posedness and scattering result for solutions to \eqref{nls} with non-zero average dispersion and \emph{small} initial data in $H^1$ (see Theorem~\ref{T:WP}).  On the other hand, we can also demonstrate the existence of solutions to \eqref{nls} that blow up in finite time (see Theorem~\ref{T:blowup}).  In particular, we may obtain finite-time blowup regardless of the sign of the average dispersion.  To prove the blowup result, we combine the virial identity with a scaling argument to demonstrate the existence of blowup solutions in any `focusing' step.  We briefly summarize our main results as follows (with the statements given in the main body of the paper: 
\begin{theorem*}\text{ }
\begin{itemize}
\item[1.] The standard Strichartz estimates hold for the linear equation
\[
i\partial_t u + \gamma(t)\Delta u = 0
\]
for admissible dispersion maps $\gamma$. See Theorem~\ref{T:Strichartz} and Theorem~\ref{T:inhomogeneous}.
\item[2.] For admissible dispersion maps $\gamma$, small initial data in $H^1$ lead to global solutions to \eqref{nls} that scatter.  See Theorem~\ref{T:WP}.
\item[3.] For piecewise constant dispersion maps $\gamma$, equation \eqref{nls} admits solutions that blow up in finite time.  See Theorem~\ref{T:blowup}.
\end{itemize}
\end{theorem*}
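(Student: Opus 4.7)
The plan is to address the three items in order, with the main technical effort reserved for item 3 (blowup).

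For item 1, the crucial observation is that the dispersion operators at different times commute, so the linear equation admits the explicit solution formula
\[
u(t) = e^{i\Gamma(t)\Delta}u_0, \qquad \Gamma(t) := \int_0^t \gamma(s)\,ds.
\]
Strichartz estimates for the standard propagator $s \mapsto e^{is\Delta}u_0$ are classical, so the task reduces to transferring norms via the change of variables $s = \Gamma(t)$ on each monotonicity interval of $\Gamma$. The hypothesis $\gamma^{-1} \in L_t^\infty$ bounds the Jacobian $|\gamma(t)|^{-1}$ uniformly, while $\langle\gamma\rangle\neq 0$ ensures that $\Gamma$ is proper $\R\to\R$ with only a bounded number of preimages per period. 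Summing the changes of variables then yields the homogeneous bound, and the inhomogeneous estimate follows via Christ--Kiselev. Item 2 is then a routine small-data fixed-point argument in the resolution spaces $L_t^q W_x^{1,r}$ supplied by item 1.

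The main work is item 3. The conceptual obstacle is that the natural DMNLS energy is not conserved across step boundaries, so the standard focusing-NLS energy/virial route to blowup cannot be applied globally; I get around this by engineering the blowup to happen strictly inside a single focusing step. Assume $\gamma_+ > 0$ (otherwise time-translate so that a positive step comes first). On $[0, t_+)$ the rescaling $\tau = \gamma_+ t$, $v(\tau, x) := u(\tau/\gamma_+, x)$, converts \eqref{nls} into the autonomous focusing cubic NLS
\[
i\partial_\tau v + \Delta v + \gamma_+^{-1}|v|^2 v = 0, \qquad \tau \in [0, \gamma_+ t_+),
\]
whose variance $V(\tau) = \int |x|^2 |v(\tau,x)|^2\,dx$ satisfies the classical Glassey identity
\[
V''(\tau) = 8\|\grad v\|_{L^2}^2 - 6\gamma_+^{-1}\|v\|_{L^4}^4 = 8E(v) - 2\gamma_+^{-1}\|v\|_{L^4}^4 \leq 8E(v),
\]
with $E$ the conserved energy of the rescaled equation. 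Hence any $v_0 \in \Sigma := H^1 \cap L^2(|x|^2\,dx)$ with $E(v_0) < 0$ must blow up in finite $\tau$-time.

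To place the blowup strictly before $\tau = \gamma_+ t_+$, I use the critical scaling $v_0 \mapsto v_0^\lambda(x) := \lambda v_0(\lambda x)$, under which $E(v_0^\lambda) = \lambda E(v_0)$ and $V(v_0^\lambda) = \lambda^{-3}V(v_0)$; the quadratic upper bound on $V$ then forces a blowup $\tau$-time of order $\lambda^{-2}$, which for $\lambda$ large lies well inside the first step. Transferring back via $u_0 = v_0^\lambda$ furnishes an $H^1$ initial datum whose solution to \eqref{nls} blows up before the dispersion switches, so the construction is independent of the sign of $\langle\gamma\rangle$. The remaining bookkeeping---verifying the virial identity in $\R^3$, checking that the scaled datum lies in $\Sigma$ and inherits negative energy, and handling the time-shift when only $\gamma_-$ is positive---is routine.
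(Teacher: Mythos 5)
Your items 1 and 2 follow essentially the paper's own route: the explicit propagator $e^{i\Gamma(t)\Delta}$, a change of variables $s=\Gamma(t)$ on monotonicity intervals with Jacobian controlled by $\|\gamma^{-1}\|_{L^\infty}$ and multiplicity of preimages controlled via $\langle\gamma\rangle\neq 0$, Christ--Kiselev for the inhomogeneous estimates, and a standard small-data contraction. The genuine divergence is in item 3. The paper works directly with the un-rescaled equation $i\partial_t u+\gamma_+\Delta u+|u|^2u=0$ under the Holmer--Roudenko-type conditions \eqref{MER} (mass-energy below that of the rescaled ground state $R_+$, gradient above), and the technical heart is the energy-trapping Lemma~\ref{L:virial}, which upgrades these threshold conditions to a \emph{uniform quantitative} bound $\int\gamma_+(1+\eps)|\nabla u|^2-\tfrac34|u|^4\,dx<-c$ on the whole lifespan; the $\eps$-room is then spent absorbing the error terms of a \emph{localized} virial identity, which is what lets the paper also treat radial data without the finite-variance assumption $xu_0\in L^2$. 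You instead invoke the classical Glassey criterion $E(v_0)<0$ after rescaling time, which is simpler and entirely adequate for the bare existence claim in item 3 of the summary theorem, but strictly weaker than Theorem~\ref{T:blowup}: it misses the positive-energy data admitted by \eqref{MER} and, as stated, requires $|x|v_0\in L^2$, so it does not cover the radial infinite-variance case. The part you share with the paper --- and it is the essential new idea here --- is the scaling $u_0\mapsto\lambda u_0(\lambda\cdot)$, under which the virial data scale as $f_\lambda(0)=O(\lambda^{-3})$, $f_\lambda'(0)=O(\lambda^{-1})$, $f_\lambda''\leq -c\lambda$, compressing the blowup time to $O(\lambda^{-2})\ll t_+$ so that the argument never has to cross a step boundary where the energy fails to be conserved. (Do make the $f_\lambda'(0)=O(\lambda^{-1})$ bound explicit; it is needed to locate the positive root of the quadratic majorant at $O(\lambda^{-2})$.) In short: your proof is correct for the statement as summarized, more elementary, but establishes a smaller class of blowup data than the paper's Theorem~\ref{T:blowup}.
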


This combination of results, namely small-data scattering together with the existence of finite-time blowup solutions, leads to the interesting problem of identifying sharp conditions on the initial data that guarantee global well-posedness and scattering.  In the setting of \cite{AntonelliSautSparber}, the authors considered the mass-critical dispersion-managed NLS and were able to describe a sharp condition for global-wellposedness purely in terms of the conserved mass.  In the case of the standard intercritical NLS (e.g. \eqref{nls} with $\gamma\equiv 1$), the sharp condition for scattering versus blowup is described in terms of a combination of the mass and energy (see e.g. \cite{HR}).  In the dispersion-managed setting, one does not have a conserved energy; instead, a different energy is conserved on each interval on which $\gamma$ is constant.  This makes it challenging to adapt any type of `energy trapping' argument in order to propagate bounds even over one full period of the dispersion map. We plan to revisit this problem in a future work. 

The rest of the paper is organized as follows: In Section~\ref{S:Strichartz} we establish the global-in-time Strichartz estimates for the underlying linear model.  In Section~\ref{S:GWP}, we establish global well-posedness and scattering for sufficiently small data.  Finally, in Section~\ref{S:blowup}, we prove the blowup result. 

\subsection{Notation} We write $A \lesssim B$ or $B \gtrsim A$ to denote the inequality $A \leq CB$ for some $C > 0$.  We use $'$ to denote H\"older duals.  That is, for $p\in[1,\infty]$, we write $p'\in[1,\infty]$ for the solution to $\tfrac{1}{p}+\tfrac{1}{p'}=1$.  Our notation for the Fourier transform is
\[
  \F[f](\xi) = \hat{f}(\xi) := (2\pi)^{-\frac{d}{2}} \int_{\R^d} e^{i x \cdot \xi} f(x) \,dx.
\]
The Schr\"odinger group is then given by 
\[
 e^{i\cdot\Delta}=\F^{-1}e^{-i\cdot\xi^2}\F.
\]

\section{Strichartz estimates for the dispersion-managed equation}\label{S:Strichartz}

In this section we establish global-in-time Strichartz estimates for dispersion-managed Schr\"odinger equations of the form
\begin{equation}\label{DMLS}
\begin{cases} i\partial_t u + \gamma(t)\Delta u = 0, \\ u|_{t=s}=\varphi.\end{cases}
\end{equation}
The equation \eqref{DMLS} has the solution 
\begin{equation}\label{wiG}
u(t,s) = e^{i\Gamma(t,s)\Delta}\varphi,\qtq{where} \Gamma(t,s)=\int_s^t \gamma(\tau)\,d\tau. 
\end{equation}

Our main result addresses the following class of dispersion maps.  A typical example is shown in Figure~\ref{FIG}.
\begin{definition}[Admissible]\label{def:admissible} We call $\gamma:\R\to\R$ \emph{admissible} if it satisfies the following conditions:
\begin{itemize}
\item $\gamma$ is one-periodic: $\gamma(t+1)=\gamma(t)$ for all $t\in\R$.
\item $\gamma$ and $\gamma^{-1}$ are bounded:  $\|\gamma\|_{L^\infty}+\|\gamma^{-1}\|_{L^\infty}<\infty$. 
\item $\gamma$ has at most finitely many discontinuities on $[0,1]$.
\item $\gamma$ has nonzero average over its period: $\langle \gamma\rangle:=\int_0^1\gamma(t)\,dt\neq 0$.  
\end{itemize}
\end{definition}
\begin{figure}[h!]
\includegraphics[width=.5\linewidth]{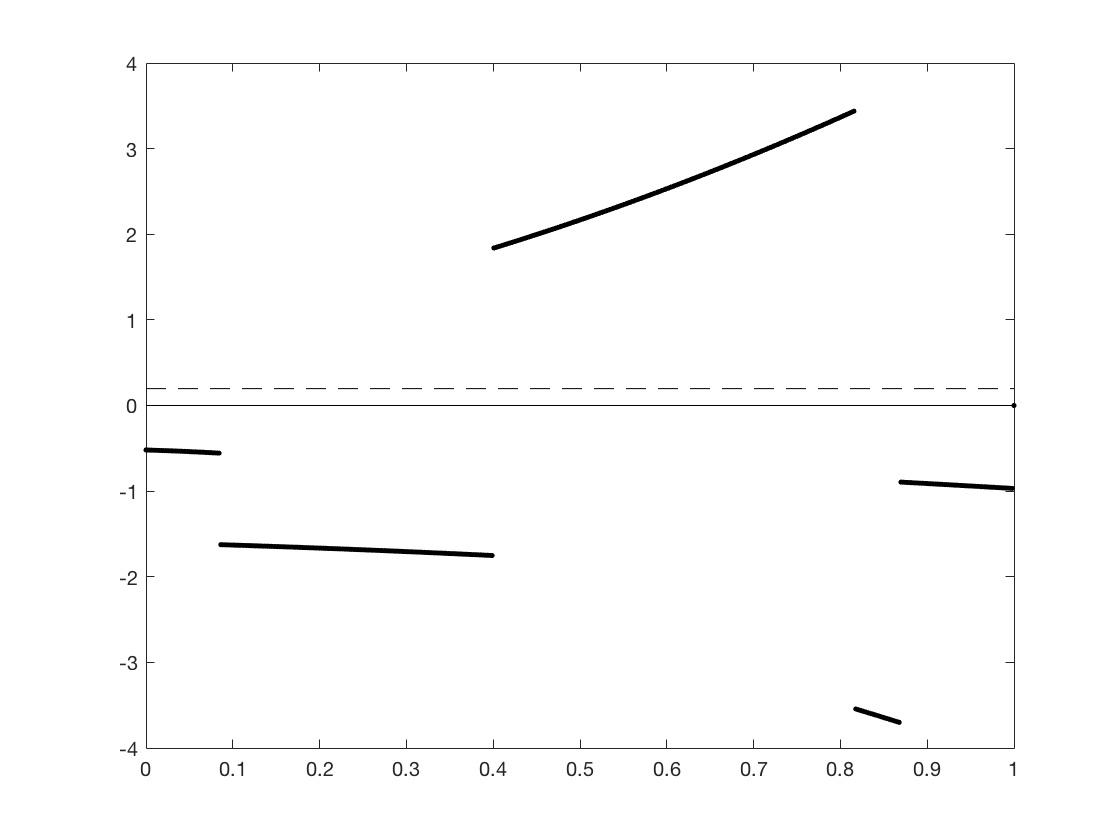}\includegraphics[width=.5\linewidth]{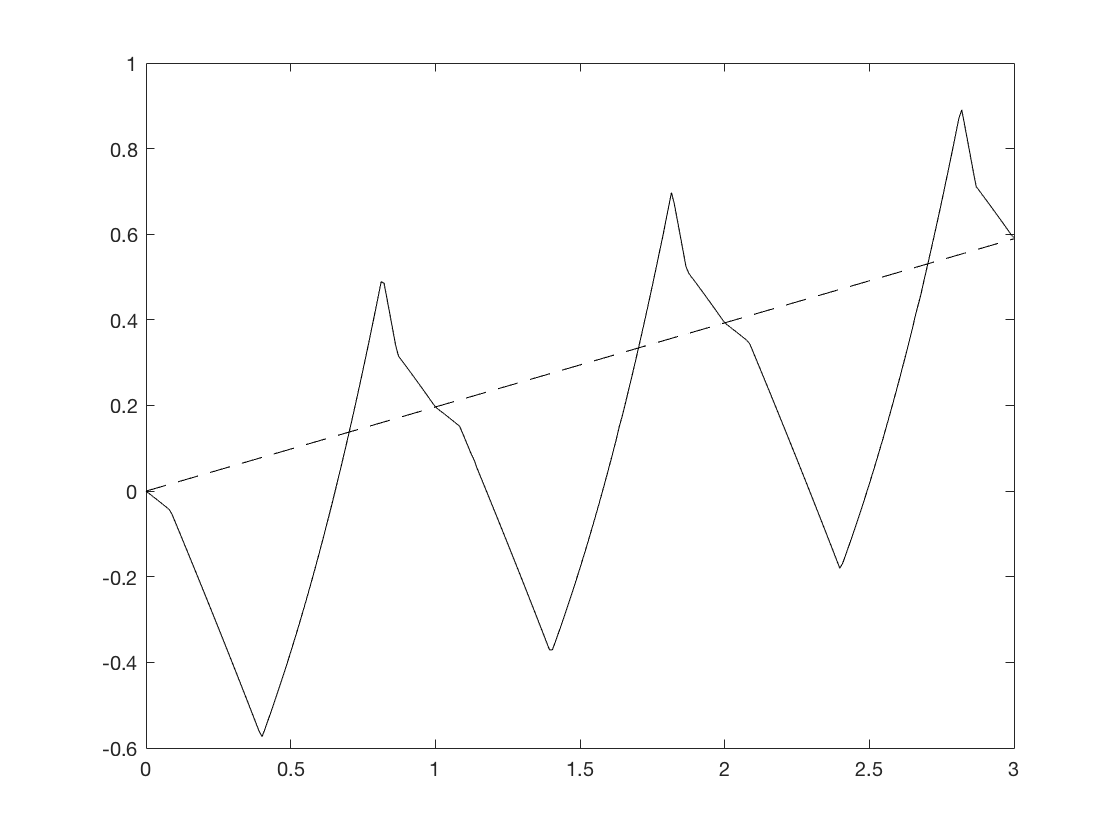}

\caption{On the left is plotted one period of $\gamma$. The solid line is the line $y=0$; the dotted line is $y=\langle\gamma\rangle$, which in this case is small and positive.  On the right is plotted $t\mapsto \Gamma(t,0)$ for $t\in[0,3]$; the dotted line corresponds to the line $y=\langle \gamma \rangle t$.}\label{FIG}
\end{figure}

The class of admissible functions includes the important piecewise constant case \eqref{def:gamma} in the case of nonzero average dispersion.  Indeed, Definition~\ref{def:admissible} is basically a slight generalization of this special case.  On the other hand, Definition~\ref{def:admissible} does \emph{not} permit the case that $\gamma\to 0$ along some sequence of times.  In particular, the proof of the global Strichartz estimates given below requires that $\gamma$ stay bounded away from zero.  A model of NLS in which $\gamma$ vanishes was considered in \cite{Fanelli}.  It is possible that some Lorentz-modified Strichartz estimates could be established in this setting (e.g. by modifying the estimate in \eqref{lorentz-maybe}), although we did not pursue that direction here.  Instead, our main result shows that for admissible functions $\gamma$, we can derive global Strichartz estimates for \eqref{DMLS} directly from those known to hold for the usual Schr\"odinger equation (cf. \cite{GinibreVelo, KeelTao, Strichartz}). 

\begin{theorem}[Strichartz estimates]\label{T:Strichartz} For any dimension $d\geq 1$, any $2\leq q,r\leq\infty$ satisfying
\begin{equation}\label{schrodinger-admissible}
\tfrac{2}{q}+\tfrac{d}{r}=\tfrac{d}{2}\qtq{and} (d,q,r)\neq(2,2,\infty),
\end{equation}
and any admissible $\gamma$ (in the sense of Definition~\ref{def:admissible}), there exists $C=C(\gamma)>0$ so that for any $\varphi\in L^2$ and $s\in\R$, we have 
\[
\|e^{i\Gamma(t,s)\Delta}\varphi\|_{L_t^q L_x^r(\R\times\R^d)}\leq C\|\varphi\|_{L^2(\R^d)},\qtq{where} \Gamma(t,s)=\int_s^t \gamma(\tau)\,d\tau.
\]
\end{theorem}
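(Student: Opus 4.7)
The plan is to derive the Strichartz bound from the standard (constant-coefficient) Strichartz estimates by changing variables on the monotonicity intervals of the cumulative phase $G(t) := \int_0^t \gamma(\tau)\,d\tau$. First, the factorization
\[
e^{i\Gamma(t,s)\Delta}\varphi = e^{iG(t)\Delta}\bigl(e^{-iG(s)\Delta}\varphi\bigr)
\]
together with the $L^2$-isometry of $e^{i\sigma\Delta}$ shows that one may reduce to $s=0$. The endpoint $q=\infty$ (which forces $r=2$) then follows immediately from $L^2$-unitarity, so I focus on the case $q<\infty$.

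Set $\phi(\sigma) := \|e^{i\sigma\Delta}\varphi\|_{L_x^r(\R^d)}$; the usual Strichartz estimate gives $\|\phi\|_{L^q(\R)}\lesssim\|\varphi\|_{L^2}$, and the goal is to bound $\int_\R \phi(G(t))^q\,dt$. Because $\gamma^{-1}\in L^\infty$ forces $|\gamma|\geq c>0$ while $\gamma$ has only finitely many discontinuities on $[0,1]$, on each continuity interval $\gamma$ has constant sign (by the intermediate value theorem). Extending by periodicity yields a partition (up to a null set) $\R = \bigcup_{n\in\Z,\,1\leq i\leq N} J_{n,i}$ on which $G$ is strictly monotonic. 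The change of variables $\sigma = G(t)$, $d\sigma = \gamma(t)\,dt$ on each piece gives
\[
\int_{J_{n,i}}\phi(G(t))^q\,dt = \int_{G(J_{n,i})}\phi(\sigma)^q\frac{d\sigma}{|\gamma(t(\sigma))|}\leq \|\gamma^{-1}\|_{L^\infty}\int_{G(J_{n,i})}\phi(\sigma)^q\,d\sigma.
\]

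Summing over $(n,i)$ yields
\[
\int_\R \phi(G(t))^q\,dt \leq \|\gamma^{-1}\|_{L^\infty}\int_\R \phi(\sigma)^q\, m(\sigma)\,d\sigma,\qquad m(\sigma) := \#\{t\in\R : G(t)=\sigma\},
\]
and everything reduces to a uniform bound on the multiplicity $m$. Here the non-zero average hypothesis enters crucially: write $G(t) = \langle\gamma\rangle t + h(t)$, where $1$-periodicity of $\gamma$ makes $h$ continuous and $1$-periodic, hence bounded. Any preimage of $\sigma$ under $G$ then satisfies $|\langle\gamma\rangle t - \sigma| \leq \|h\|_{L^\infty}$, confining $t$ to an interval of length $2\|h\|_{L^\infty}/|\langle\gamma\rangle|$; this interval meets only $O(1)$ periods of $\gamma$, each of which contains at most $N$ monotonicity pieces, so $m(\sigma) \leq M$ for some $M=M(\gamma)$. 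Combining with the standard Strichartz bound on $\|\phi\|_{L^q}$ yields the claim.

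The step I expect to require the most care is the uniform overlap bound on $m$: it is precisely where both admissibility hypotheses $\langle\gamma\rangle\neq 0$ and $\gamma^{-1}\in L^\infty$ are genuinely used, and it explains why the estimate globalizes in time---were $\langle\gamma\rangle$ to vanish, the preimages of a single $\sigma$ under $G$ could accumulate at infinity and destroy the bound.
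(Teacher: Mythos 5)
Your proof is correct and follows essentially the same route as the paper: both arguments reduce to $s=0$ by unitarity, split $\R$ into the monotonicity pieces of $\Gamma$ determined by the sign of $\gamma$, change variables with Jacobian controlled by $\|\gamma^{-1}\|_{L^\infty}$, and control the overlap using the drift estimate $\Gamma(t)=\langle\gamma\rangle t+O(\|\gamma\|_{L^\infty})$ together with the finite count of discontinuities per period. The only (cosmetic) difference is bookkeeping: you bound the pointwise multiplicity $m(\sigma)=\#\{t:\Gamma(t)=\sigma\}$ directly, whereas the paper tiles the range into unit intervals $[n,n+1)$ and bounds the number of monotonicity pieces mapping into each tile.
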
 

\begin{remark*} The proof below will show that the constant may be taken to be
\begin{equation}\label{Cgamma}
C(\gamma)=C_{\text{Str}}\cdot\| \gamma^{-1}\|_{L^\infty}^{\frac{1}{q}}\cdot \biggl[1+N_\gamma\biggl\{1+\frac{[1+4\|\gamma\|_{L^\infty}]}{ |\langle \gamma\rangle|}\biggr\} \biggr]^{\frac{1}{q}},
\end{equation}
where $\langle \gamma\rangle = \int_0^1 \gamma(s)\,ds$ is the average dispersion; $N_\gamma$ is the number of discontinuities of $\gamma$ in $[0,1]$; and $C_{\text{Str}}$ is the constant for the standard $L_x^2(\R^d)\to L_t^q L_x^r(\R^d)$ Strichartz estimate.  In particular, our estimate breaks down when $\gamma$ becomes unbounded or approaches zero; when the average dispersion tends to zero; or when the number of discontinuities in one period becomes unbounded.
\end{remark*}

The proof relies on a few lemmas.  We begin with the following:

\begin{lemma}\label{L:lem1} Let $\gamma$ be a one-periodic function on $\R$. Define 
\[
\Gamma(t)=\int_0^t \gamma(s)\,ds \qtq{and}\langle\gamma\rangle=\int_0^1 \gamma(s)\,ds.
\]
Then 
\begin{equation}
\left|\Gamma(t) - t \jbrak{\gamma} \right| \leq 2\|\gamma\|_{L^\infty}\qtq{for all}t\in\R.
\end{equation}
\end{lemma}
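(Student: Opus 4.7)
The plan is to exploit the fact that one-periodicity of $\gamma$ forces $\Gamma$ to grow exactly linearly with slope $\langle \gamma\rangle$ at integer times, while on any unit-length subinterval $\Gamma$ can deviate by at most $\|\gamma\|_{L^\infty}$ from this linear trend. So the whole argument reduces to splitting $t$ into an integer part (which gives exact linear growth) and a fractional part (which is controlled by the uniform bound on $\gamma$).

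More concretely, I would first observe that by one-periodicity, for \emph{every} $a\in\R$,
\[
\int_a^{a+1}\gamma(s)\,ds = \int_0^1\gamma(s)\,ds = \langle\gamma\rangle.
\]
Iterating this (and handling negative $n$ by using $\int_0^n = -\int_n^0$) gives $\Gamma(n)=n\langle\gamma\rangle$ for all $n\in\Z$. I would then write an arbitrary $t\in\R$ as $t=n+\theta$ with $n=\lfloor t\rfloor\in\Z$ and $\theta\in[0,1)$. Using periodicity once more to shift the integration variable,
\[
\Gamma(t) = \Gamma(n) + \int_n^{n+\theta}\gamma(s)\,ds = n\langle\gamma\rangle + \int_0^\theta \gamma(s)\,ds.
\]

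Finally, I would subtract $t\langle\gamma\rangle = (n+\theta)\langle\gamma\rangle$ to obtain
\[
\Gamma(t) - t\langle\gamma\rangle = -\theta\langle\gamma\rangle + \int_0^\theta \gamma(s)\,ds,
\]
and bound each term using $|\langle\gamma\rangle|\leq \|\gamma\|_{L^\infty}$, $\theta\in[0,1)$, and the trivial estimate $\bigl|\int_0^\theta \gamma(s)\,ds\bigr|\leq \theta\|\gamma\|_{L^\infty}\leq \|\gamma\|_{L^\infty}$. Adding these gives the claimed bound of $2\|\gamma\|_{L^\infty}$.

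There is no real obstacle here: the lemma is essentially a bookkeeping statement. The only minor care needed is to make sure the integer-shift argument works for negative $t$ as well as positive $t$, which is handled uniformly by the floor decomposition $t=\lfloor t\rfloor + \{t\}$ and the periodicity identity applied on an arbitrary unit interval.
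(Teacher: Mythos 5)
Your proof is correct and follows essentially the same route as the paper: decompose $t=\lfloor t\rfloor+\{t\}$, use periodicity to get $\Gamma(\lfloor t\rfloor)=\lfloor t\rfloor\langle\gamma\rangle$, and bound the two leftover terms by $\|\gamma\|_{L^\infty}$ each. No issues.
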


\begin{proof} Writing $\lfloor t\rfloor$ for the floor of $t$ and using $1$-periodicity of $\gamma$, we have that
\begin{align*}
\Gamma(t)&= \lfloor t\rfloor \jbrak{\gamma} + \int_{\lfloor t \rfloor}^t \gamma(s) \,ds\\
& = t \jbrak{\gamma} + (t - \lfloor t\rfloor)\jbrak{\gamma} + \int_{\lfloor t \rfloor}^t \gamma(s)\,ds.
\end{align*}
As $|t-\lfloor t\rfloor|\leq 1$ for all $t\in\R$, this yields
\[
\bigl| \Gamma(t)-t\langle\gamma\rangle\bigr| \leq |\langle \gamma\rangle|+ \|\gamma\|_{L^\infty},
\]
which implies the result. \end{proof}

In particular, Lemma~\ref{L:lem1} implies (via the intermediate value theorem) that for $\langle \gamma\rangle\neq 0$, the range of $\Gamma(\cdot,0)$ equals $\R$.  We also have the following corollary, which plays a role when we later partition the range of $\Gamma(\cdot,0)$.

\begin{corollary}\label{L:lem2}  Let $\gamma$ be a one-periodic function on $\R$, with
\[
\Gamma(t)=\int_0^t \gamma(s)\,ds,\qtq{and}\langle \gamma\rangle=\int_0^1 \gamma(s)\,ds\neq 0.
\]
Then for any $\delta>0$ and any $t_1,t_2\in\R$, we have that 
\[
|t_2-t_1|> \frac{\delta+4\|\gamma\|_{L^\infty}}{|\langle \gamma\rangle|}\implies |\Gamma(t_2)-\Gamma(t_1)|>\delta. 
\]
\end{corollary}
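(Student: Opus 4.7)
The plan is to read off this corollary directly from Lemma~\ref{L:lem1} via a reverse triangle inequality. Lemma~\ref{L:lem1} controls the deviation of $\Gamma$ from the linear function $t \mapsto t\langle\gamma\rangle$, and the corollary is essentially just the statement that a function uniformly close to a line inherits a (slightly degraded) bi-Lipschitz lower bound from that line.

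More concretely, I would write
\[
\Gamma(t_2) - \Gamma(t_1) = (t_2 - t_1)\langle\gamma\rangle + \bigl[\Gamma(t_2) - t_2\langle\gamma\rangle\bigr] - \bigl[\Gamma(t_1) - t_1\langle\gamma\rangle\bigr],
\]
and then apply Lemma~\ref{L:lem1} to each of the two bracketed quantities to bound them by $2\|\gamma\|_{L^\infty}$, giving an error term of total size at most $4\|\gamma\|_{L^\infty}$. The reverse triangle inequality then yields
\[
|\Gamma(t_2) - \Gamma(t_1)| \geq |t_2 - t_1|\,|\langle\gamma\rangle| - 4\|\gamma\|_{L^\infty}.
\]
Plugging in the hypothesis $|t_2 - t_1| > (\delta + 4\|\gamma\|_{L^\infty})/|\langle\gamma\rangle|$ (which makes sense precisely because $\langle\gamma\rangle \neq 0$), the right-hand side exceeds $\delta$, which is the desired conclusion.

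There is no real obstacle here; the statement is a short arithmetic consequence of Lemma~\ref{L:lem1}. The only thing worth noting is that the factor $4\|\gamma\|_{L^\infty}$ (rather than $2\|\gamma\|_{L^\infty}$) appears because the error is picked up at both endpoints $t_1$ and $t_2$, and this is exactly the constant the authors have built into the hypothesis.
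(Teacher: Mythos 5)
Your proposal is correct and follows essentially the same route as the paper: the authors likewise apply Lemma~\ref{L:lem1} at both endpoints (after a WLOG reduction to $\langle\gamma\rangle>0$ and $t_2>t_1$) to obtain $\Gamma(t_2)-\Gamma(t_1)\geq (t_2-t_1)\langle\gamma\rangle - 4\|\gamma\|_{L^\infty}$ and then conclude. Your version with absolute values and the reverse triangle inequality is just the symmetrized form of the same one-line computation.
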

\begin{proof} Without loss of generality, suppose that $\langle\gamma\rangle>0$ and $t_2>t_1$.  Then by Lemma~\ref{L:lem1},     
\[
\Gamma(t_2) - \Gamma(t_1) \geq (t_2 - t_1) \jbrak{\gamma} - 4\|\gamma\|_{L^\infty},
\]
which implies the result.\end{proof}

We turn to the proof of Theorem~\ref{T:Strichartz}.
\begin{proof}[Proof of Theorem~\ref{T:Strichartz}] We fix the dimension $d\geq 1$ and $(q,r)$ obeying \eqref{schrodinger-admissible}.  We let $\gamma$ be admissible in the sense of Definition~\ref{def:admissible} and suppose $\langle \gamma\rangle =\int_0^1\gamma(s)\,ds>0$. It will suffice to prove that
\[
\|e^{i\Gamma(t)\Delta}\varphi\|_{L_t^q L_x^r(\R\times\R^d)}\lesssim \|\varphi\|_{L^2},
\]
where $\varphi\in L^2$ and
\[
\Gamma(t)=\int_0^t \gamma(s)\,ds. 
\]

For convenience, we split $\R$ (viewed as the domain of $\Gamma$) into a disjoint union $\R=\mathbb{I}\cup\mathbb{D}$ so that $\Gamma$ is increasing on $\mathbb{I}$ and decreasing on $\mathbb{D}$ (equivalently, $\gamma$ is positive on $\mathbb{I}$ and negative on $\mathbb{D}$). It then suffices to establish $L_t^q L_x^r$ bounds on $\mathbb{I}\times\R^d$, say.  Note that $\mathbb{I}$ and $\mathbb{D}$ are themselves disjoint unions of intervals; moreover, if we write $N_\gamma$ for the number of discontinuities of $\gamma$ in one period, then the number of such intervals comprising $\mathbb{I}$ (or $\mathbb{D}$) within any unit interval is at most $N_\gamma+1$.

For each $n\in\mathbb{Z}$, we may now decompose
\[
\Gamma^{-1}([n,n+1))\cap \mathbb{I}=\bigcup_{k=1}^{K_n} U_k^n
\]
for some disjoint collection of intervals $\{U_k^n\}_{k=1}^{K_n}$, where each $U_k^n$ is contained in a distinct interval of $\mathbb{I}$. In particular, $\Gamma$ is injective when restricted to each $U_k^n$. The key in what follows is to bound $K_n$ uniformly in $n$.  In particular, we claim that
\begin{equation}\label{covering-number}
K_n \leq K_\gamma:=1+N_\gamma\biggl\{1+\frac{[1+4\|\gamma\|_{L^\infty}]}{ \langle \gamma\rangle}\biggr\}\qtq{for all}n\in\mathbb{Z}.
\end{equation}
Indeed, by Corollary~\ref{L:lem2}, we have that
\[
|\Gamma^{-1}([n,n+1))| \leq \frac{1+4\|\gamma\|_{L^\infty}}{\langle \gamma\rangle}\qtq{for each}n\in\mathbb{Z},
\]
where $|\cdot|$ denotes Lebesgue measure.  We now observe that each $U_k^n$ corresponds to a distinct interval in $\mathbb{I}$, and hence to at least one discontinuity of $\gamma$ (at least, if $K_n>1$).  In particular, if \eqref{covering-number} were to fail, then we could find more than $N_\gamma (1+C)$ discontinuities of $\gamma$ in a subset of $\R$ of total length less than $C$, a contradiction.

Now, using the change of variables $s=\Gamma(t)$ on each $U_k^n$ and the standard $L_x^2\to L_t^q L_x^r$ Strichartz estimates, we estimate
%
%
\begin{align}
\int_{\mathbb{I}} \|e^{i \Gamma(t) \Delta} \varphi \|_{L_x^r}^q \,dt  & \leq \sum_{n\in\mathbb{Z}}\sum_{k=1}^{K_n} \int_{U_k^n} \|e^{i \Gamma(t) \Delta} \varphi \|_{L_x^r}^q \,dt\nonumber \\
& \leq \sum_{n\in\mathbb{Z}} K_\gamma\|\gamma^{-1}\|_{L^\infty} \int_{n}^{n+1} \|e^{is\Delta}\varphi\|_{L_x^r}^q\,ds\label{lorentz-maybe} \\
& \leq  K_\gamma\|\gamma^{-1}\|_{L^\infty}\|e^{it\Delta}\varphi\|_{L_t^q L_x^r(\R\times\R^d)}^q \nonumber \\
& \leq C_{\text{Str}}^qK_\gamma\|\gamma^{-1}\|_{L^\infty}\|\varphi\|_{L^2}^q,\nonumber
\end{align}
where $C_{\text{Str}}$ is the constant in the usual $L^2(\R^d)\to L_t^q L_x^r(\R^d)$ Strichartz estimate. 
\end{proof}

With Theorem~\ref{T:Strichartz} in place, we may obtain the full range of Strichartz estimates (other than the double $L_t^2$ endpoint, which we did not pursue here) via the method of $TT^*$ and the Christ--Kiselev lemma (see \cite{CK}).  In particular, we obtain the following:

\begin{theorem}[Inhomogeneous Strichartz estimates]\label{T:inhomogeneous} Let $d\geq 1$ and let $q,\tilde q,r,\tilde r$ obey \eqref{schrodinger-admissible} with $(q,\tilde q)\neq(2,2)$. Let $\gamma$ be admissible in the sense of Definition~\ref{def:admissible}.  Then for any $t_0\in\R$ we have the estimates
\begin{align*}
\biggl\| \int_{\R} e^{i\Gamma(t_0,t)\Delta}F(s)\,ds\biggr\|_{L^2(\R^d)} & \leq C\|F\|_{L_t^{q'} L_x^{r'}(\R\times\R^d)}, \\
\biggl\| \int_{t_0}^t e^{i\Gamma(t,s)\Delta}F(s)\,ds\biggr\|_{L_t^q L_x^r(\R\times\R^d)}&\leq C \|F\|_{L_t^{\tilde q'}L_x^{\tilde r'}(\R\times\R^d)}
\end{align*}
for suitable $C=C(\gamma)$, where $\Gamma(t,s)=\int_s^t \gamma(\tau)\,d\tau.$
\end{theorem}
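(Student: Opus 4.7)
The plan is to derive both inhomogeneous estimates from Theorem~\ref{T:Strichartz} via the standard $TT^*$ / Christ--Kiselev machinery, exploiting the fact that the two-parameter propagator $\{e^{i\Gamma(t,s)\Delta}\}$ satisfies the cocycle identity
\[
e^{i\Gamma(t,s)\Delta} = e^{i\Gamma(t,t_0)\Delta}\,e^{i\Gamma(t_0,s)\Delta}, \qquad \Gamma(t_0,t) = -\Gamma(t,t_0),
\]
which follows directly from $\Gamma(t,s) = \int_s^t \gamma\,d\tau$ and the group property of the free Schr\"odinger evolution. Since $e^{i\sigma\Delta}$ is unitary on $L^2$ with $L^2$-adjoint $e^{-i\sigma\Delta}$, these identities mean the two-parameter propagator behaves just like the free propagator for $TT^*$ purposes.

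First I would fix $t_0 \in \R$ and define $T : L^2(\R^d) \to L_t^q L_x^r(\R \times \R^d)$ by $T\varphi(t) = e^{i\Gamma(t,t_0)\Delta}\varphi$. Theorem~\ref{T:Strichartz}, applied with $s = t_0$, states exactly that $\|T\| \leq C(\gamma)$. Dualizing and using the unitarity relation above,
\[
T^*F = \int_{\R} e^{-i\Gamma(t,t_0)\Delta}F(t)\,dt = \int_{\R} e^{i\Gamma(t_0,t)\Delta}F(t)\,dt,
\]
so the bound $\|T^*\|_{L_t^{q'}L_x^{r'} \to L^2} \leq C(\gamma)$ is precisely the first claimed estimate.

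Next, to handle mixed exponents $(q,r)$ and $(\tilde q,\tilde r)$, I would introduce the two operators $T_1\varphi(t) = e^{i\Gamma(t,t_0)\Delta}\varphi$ and $T_2\varphi(s) = e^{i\Gamma(s,t_0)\Delta}\varphi$, both bounded from $L^2$ to the corresponding Strichartz spaces by Theorem~\ref{T:Strichartz}. Composing $T_1$ with $T_2^*$ and using the cocycle identity yields
\[
T_1 T_2^* F(t) = \int_{\R} e^{i\Gamma(t,t_0)\Delta}e^{i\Gamma(t_0,s)\Delta}F(s)\,ds = \int_{\R} e^{i\Gamma(t,s)\Delta}F(s)\,ds,
\]
which is therefore bounded from $L_t^{\tilde q'}L_x^{\tilde r'}$ to $L_t^q L_x^r$ with constant $\lesssim C(\gamma)^2$. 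This is the non-retarded (full real line) version of the second estimate.

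Finally I would upgrade from $\int_{\R}$ to $\int_{t_0}^t$ via the Christ--Kiselev lemma \cite{CK}. Its hypothesis requires that the time exponents satisfy $\tilde q' < q$; this is exactly where the restriction $(q,\tilde q) \neq (2,2)$ enters, since $q, \tilde q \geq 2$ forces $\tilde q' \leq 2 \leq q$ with equality only in the forbidden endpoint case. The kernel $K(t,s) = e^{i\Gamma(t,s)\Delta}$ is well-defined as a bounded operator on the appropriate Banach-space-valued $L^p$ spaces, so Christ--Kiselev applies verbatim and produces the retarded bound with the same constant up to an absolute factor. No step presents a genuine obstacle beyond the verification of the cocycle property; the reason we can mirror the classical argument is precisely the non-vanishing of $\gamma$ and its average, which were already used to secure the homogeneous estimate of Theorem~\ref{T:Strichartz}.
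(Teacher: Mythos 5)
Your proposal is correct and follows exactly the route the paper indicates (which it only sketches in one sentence): the $TT^*$ method applied to the propagator $e^{i\Gamma(t,t_0)\Delta}$ via the cocycle identity, followed by the Christ--Kiselev lemma to pass to the retarded integral, with the condition $(q,\tilde q)\neq(2,2)$ ensuring $\tilde q'<q$. Your write-up in fact supplies more detail than the paper itself does.
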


\section{Well-posedness}\label{S:GWP}

In this section, we consider the initial-value problem
\begin{equation}\label{E:DMNLS}
    \begin{cases}
        i\partial_t u + \gamma(t) \Delta u + |u|^2 u=0 \\
        u|_{t=t_0} = u_0 \in H^1(\R^3)
    \end{cases}
\end{equation}
on $\R\times\R^3$, where $\gamma$ is an admissible function in the sense of Definition~\ref{def:admissible}.  This includes the important special case  
\begin{equation}\label{E:dispersion}
    \gamma(t) = \begin{cases}
        \gamma_+, & 0 < t < t_+ \\
        -\gamma_- & t_+ < t < 1
    \end{cases}
\end{equation}
for $\gamma_\pm>0$, extended periodically to $\R$, provided the average dispersion 
\begin{equation}\label{E:avgdisp}
    \jbrak{\gamma} := \int_0^1 \gamma(s)\,ds
\end{equation}
is nonzero. As in the previous section, we define
\begin{equation}\label{E:capgamma}
\Gamma(t, s) = \int_s^t \gamma(\tau) \,d\tau,
\end{equation}
so that the Duhamel formula for the solution to \eqref{E:DMNLS} is given by
\[
u(t)= e^{i\Gamma(t,t_0)\Delta}u_0 +i\int_{t_0}^t e^{i\Gamma(t,s)\Delta}(|u|^2u)(s)\,ds.
\]

The local $H^1$ theory (i.e. local existence for $H^1$ initial data) for \eqref{E:DMNLS}--\eqref{E:dispersion} was previously considered in \cite{AntonelliSautSparber}.  In this section, we apply the Strichartz estimates obtained in the previous section to establish a {global} result, namely, global well-posedness and scattering for sufficiently small initial data in $H^1$.  In fact, with the global Strichartz estimates in hand, the proof follows from a fairly standard contraction mapping argument. 

\begin{theorem}\label{T:WP} Let $\gamma$ be an admissible function in the sense of Definition~\ref{def:admissible} and define
\[
\Gamma(t,s)=\int_s^t \gamma(\tau)\,d\tau.
\]
Then there exists $\eta_0=\eta_0(\gamma)>0$ such that the following holds:  Given $t_0\in\R$ and $u_0\in H^1(\R^3)$, if
\[
\bigl\| |\nabla|^{\frac12} e^{i\Gamma(t,t_0)\Delta} u_0\|_{L_t^5 L_x^{\frac{30}{11}}([t_0,\infty)\times\R^3)}<\eta<\eta_0,
\]
then there exists a unique forward-global solution $u:[t_0,\infty)\times\R^3\to\C$ to \eqref{E:DMNLS}, which scatters in the sense that 
\[
\lim_{t\to\pm\infty} \|u(t)-e^{i\Gamma(t,t_0)\Delta}u_+\|_{H^1} = 0
\] 
for some $u_+\in H^1$.  Analogous statements hold backward in time. 
%
%
%
%
\end{theorem}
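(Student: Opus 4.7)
The plan is a contraction-mapping argument on a Strichartz ball, following the standard Cazenave--Weissler scheme for the intercritical $3d$ cubic NLS, with the global Strichartz estimates of Theorems~\ref{T:Strichartz} and~\ref{T:inhomogeneous} replacing those for the free Schr\"odinger group. I would construct $u$ as the fixed point of the Duhamel map
\[
\Phi_{u_0}(u)(t) := e^{i\Gamma(t,t_0)\Delta} u_0 + i \int_{t_0}^t e^{i\Gamma(t,s)\Delta}(|u|^2 u)(s)\,ds
\]
on the ball
\[
X := \Bigl\{ u \colon\ \||\nabla|^{\tfrac12} u\|_{L_t^5 L_x^{30/11}([t_0,\infty)\times\R^3)} \leq 2\eta,\ \ \|\langle\nabla\rangle u\|_S \leq 2C_\gamma\|u_0\|_{H^1}\Bigr\},
\]
where $\|\cdot\|_S$ denotes a finite intersection of Schr\"odinger-admissible Strichartz norms from Theorem~\ref{T:Strichartz}. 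I would equip $X$ with the weaker distance $d(u,v) := \|u-v\|_{L_t^\infty L_x^2} + \|u-v\|_{L_t^5 L_x^{30/11}}$, under which $X$ is closed. Theorem~\ref{T:Strichartz} controls $e^{i\Gamma\Delta}u_0$ in the subcritical norms by $C_\gamma\|u_0\|_{H^1}$; the critical norm is $<\eta$ by hypothesis.

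The heart of the argument is the nonlinear estimate. Combining the fractional Leibniz rule, H\"older's inequality, and the Sobolev embedding $\|f\|_{L^5(\R^3)} \lesssim \||\nabla|^{\tfrac12} f\|_{L^{30/11}(\R^3)}$ should give, for each $s\in[0,1]$,
\[
\||\nabla|^s(|u|^2 u)\|_{L_t^{5/3} L_x^{30/23}} \lesssim \|u\|_{L_t^5 L_x^5}^2 \, \||\nabla|^s u\|_{L_t^5 L_x^{30/11}} \lesssim \||\nabla|^{\tfrac12} u\|_{L_t^5 L_x^{30/11}}^2 \, \||\nabla|^s u\|_{L_t^5 L_x^{30/11}}.
\]
The key point is that $(\tfrac52,\tfrac{30}{7})$ satisfies \eqref{schrodinger-admissible} in $d=3$ (since $\tfrac{4}{5}+\tfrac{7}{10}=\tfrac32$), so its dual $(\tfrac53,\tfrac{30}{23})$ is a legal input pair in Theorem~\ref{T:inhomogeneous}. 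After commuting $|\nabla|^s$ past the propagator, the inhomogeneous estimate yields
\[
\||\nabla|^s\bigl(\Phi_{u_0}(u) - e^{i\Gamma\Delta}u_0\bigr)\|_{L_t^5 L_x^{30/11}} \leq C_\gamma\,(2\eta)^2\,\||\nabla|^s u\|_{L_t^5 L_x^{30/11}}.
\]
At $s=\tfrac12$ this is $\lesssim \eta^3$ and preserves the critical constraint for $\eta_0$ small; at $s\in\{0,1\}$ the second factor is merely subcritical, but is multiplied by $\eta^2$ and hence can be absorbed into the subcritical constraint once $\eta_0 = \eta_0(\gamma)$ is chosen small enough, independently of $\|u_0\|_{H^1}$. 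Together with analogous estimates for the other admissible pairs comprising $\|\cdot\|_S$, this shows $\Phi_{u_0}(X)\subset X$. The same estimate applied to $\Phi_{u_0}(u)-\Phi_{u_0}(v)$, using $\bigl||u|^2u-|v|^2v\bigr|\lesssim (|u|^2+|v|^2)|u-v|$, makes $\Phi_{u_0}$ a contraction on $(X,d)$, and Banach's fixed point theorem then produces the unique $u\in X$.

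Scattering follows from rewriting $u(t) = e^{i\Gamma(t,t_0)\Delta}\bigl[u_0 + i\int_{t_0}^t e^{-i\Gamma(s,t_0)\Delta}(|u|^2 u)(s)\,ds\bigr]$ and applying Theorem~\ref{T:inhomogeneous} to the tail $\int_t^\infty$ to conclude that the bracketed expression is Cauchy in $H^1$ as $t\to\infty$; its limit is the scattering state $u_+$. The backward-in-time statement is entirely symmetric. The main obstacle in setting up the argument is identifying a H\"older/Sobolev decomposition of the nonlinearity that is \emph{cubic} in the small critical norm alone, so that $\eta_0$ depends only on $\gamma$ and not on $\|u_0\|_{H^1}$; the admissible pair $(\tfrac52,\tfrac{30}{7})$ with its dual $(\tfrac53,\tfrac{30}{23})$, combined with the above Sobolev embedding, is the natural choice and is precisely what makes $(5,\tfrac{30}{11})$ the right critical Strichartz pair for the theorem's hypothesis. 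Beyond this exponent choice, no new ingredients particular to dispersion management are required.
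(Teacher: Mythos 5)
Your proposal is correct and follows essentially the same route as the paper: a contraction mapping for the Duhamel map on a ball defined by the small critical norm $\||\nabla|^{1/2}u\|_{L_t^5L_x^{30/11}}$ together with subcritical Strichartz norms bounded by the $H^1$ size, using the same dual pair $L_t^{5/3}L_x^{30/23}$, the same Sobolev embedding $L_x^{30/11}\hookrightarrow L_x^5$ at half-derivative regularity, and the same Cauchy-in-$H^1$ argument for scattering. The only cosmetic difference is your choice of metric ($L_t^\infty L_x^2$ plus $L_t^5L_x^{30/11}$ versus the paper's $L_t^5L_x^{30/11}$ alone), which changes nothing.
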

\begin{proof}  The proof is based on a contraction mapping argument using the Strichartz estimates established in the previous section.  We define 
    \begin{equation}\label{E:Duhamel}
        \Phi(u)(t) := e^{i \Gamma(t, t_0)\Delta}u_0 + i \int_{t_0}^t e^{i\Gamma(t, s)\Delta} |u|^2 u(s) \,ds.
    \end{equation}
We let $u_0\in H^1$ and set $A=\|u_0\|_{H^1}$. Given $\eta>0$ to be determined below, we define the complete metric space
\[
X=\{u:\|u\|_{L_t^\infty H_x^1}\leq 2CA,\ \|u\|_{L_t^5 H_x^{1,\frac{30}{11}}}\leq 2CA,\ \||\nabla|^{\frac12} u\|_{L_t^5 L_x^{\frac{30}{11}}}\leq 2\eta\},
\]
with
\[
d(u,v)=\|u-v\|_{L_t^5 L_x^{\frac{30}{11}}}\qtq{for}u,v\in X.
\]
Here and below space-time norms are taken over $[t_0,\infty)\times\R^3$, and $C>0$ is chosen to encode the implicit constants arising in Sobolev embedding and Strichartz estimates; in particular, the dependence of $\eta$ on $\gamma$ arises through the implicit constants in the Strichartz estimates (see e.g. \eqref{Cgamma} above).
 

To show that $\Phi:X\to X$, we first let $u\in X$ and use Strichartz and Sobolev embedding to estimate 
    \begin{align*}
        \| \Phi(u)\|_{L_t^\infty H_x^1} &\lesssim \|u_0\|_{H_x^1} + \left\| \int_{t_0}^t e^{i\Gamma(t, s)\Delta} |u|^2 u(s) \,ds \right\|_{L_t^\infty H_x^1}\\
        & \lesssim A + \bigl\| \langle\nabla\rangle\bigl[|u|^2u\bigr]\|_{L_t^\frac{5}{3}L_t^{\frac{30}{23}}} \\
        &\lesssim A + \|u\|_{L_{t,x}^{5}}^2 \|\jbrak{\nabla}u\|_{L_t^5 L_x^{\frac{30}{11}}}\\
        &\lesssim A + 8C\eta^2 A\leq 2CA
    \end{align*}
for suitable $C>0$ and $\eta=\eta(C)$ sufficiently small.   We then obtain the same estimate for the $L_t^5 H_x^{1,\frac{30}{11}}$-norm of $\Phi(u)$, as well.  

For the $L_t^5 \dot H_x^{\frac12,\frac{30}{11}}$ estimate, we begin with an application of Sobolev embedding.  Using the fractional chain rule, we obtain 
    \begin{align*}
        \| |\nabla|^{\frac12} \Phi(u) \|_{L_t^5 L_x^{\frac{30}{11}}} &\leq \| |\nabla|^{\frac12} e^{i \Gamma(t, t_0) \Delta}u_0 \|_{L_t^5 L_x^{\frac{30}{11}}} + \biggl\| |\nabla|^{\frac12} \int_{t_0}^t e^{i\Gamma(t, s) \Delta} |u|^2 u(s)\,ds \biggr\|_{L_t^5 L_x^{\frac{30}{11}}} \\
        &\leq \eta + C\|u\|_{L_{t,x}^5}^2 \||\nabla|^{\frac12} u\|_{L_t^5 L_x^{\frac{30}{11}}} \\
        &\leq \eta + 8C^4\eta^3 \leq 2\eta
    \end{align*}
provided $\eta$ is sufficiently small. 

To show that $\Phi$ is a contraction, we let $u,v\in X$ and estimate as above to deduce
    \begin{align}
        \|\Phi(u)- \Phi(v)\|_{L_{t,x}^5} &\lesssim \| |u|^2 u - |v|^2 v \|_{L_t^{\frac{5}{3}}L_x^{\frac{30}{23}}} \\
        &\lesssim \{\|u\|_{L_{t,x}^5}^2+\|v\|_{L_{t,x}^5}^2\} \|u - v\|_{L_t^5 L_x^{\frac{30}{11}}} \\
        &\lesssim \eta^2 \|u - v\|_{L_t^5 L_x^{ \frac{30}{11}}},
    \end{align}
which implies that $\Phi$ is a contraction for $\eta$ sufficiently small. 

It follows that $\Phi$ has a unique fixed point $u$, yielding the desired solution.  To prove the scattering result, it suffices to show that $\{e^{i\Gamma(t_0,t)\Delta}u(t)\}$ is Cauchy in $H^1$ as $t\to\infty$.  To this end, we observe that by the Duhamel formula \eqref{E:Duhamel},
\[
e^{i\Gamma(t_0,t)\Delta}u(t)-e^{i\Gamma(t_0,s)\Delta}u(s) = i\int_s^t e^{i\Gamma(t_0,\tau)\Delta}|u|^2 u(\tau)\,d\tau. 
\]
Thus, applying the Strichartz estimates and estimating as above, we obtain
\begin{align*}
\|e^{i\Gamma(t_0,t)\Delta}u(t)-e^{i\Gamma(t_0,s)\Delta}u(s)\|_{H^1} & \lesssim \| \langle \nabla \rangle [|u|^2 u]\|_{L_t^{\frac{5}{3}}L_x^{\frac{30}{23}}((s,t)\times\R^3)} \\
& \lesssim \|u\|_{L_{t,x}^5((s,t)\times\R^d)}^2 \| u\|_{L_t^5 H_x^{1,\frac{30}{11}}((s,t)\times\R^d)} \\
& \to 0 \qtq{as}s,t\to\infty,
\end{align*}
which yields the result.  \end{proof}

\section{Finite-time Blowup}\label{S:blowup}

In this section, we continue to consider the equation \eqref{E:DMNLS} but restrict attention only to the piecewise constant case \eqref{E:dispersion}.  We will adapt the virial argument to demonstrate the possibility of finite-time blowup solutions.  In particular, we will exhibit solutions that blow up on the first interval $[0,t_+)$, although the same argument would apply on any `focusing' step.  The existence of local-in-time solutions follows from \cite{AntonelliSautSparber} (or from suitable modifications of Theorem~\ref{T:WP} above), and so we will take the existence of solutions for granted and focus on the issue of blowup.  

To state our main result, we firstly introduce the \emph{ground state} solution $Q$ for the standard cubic NLS, which is the unique positive, radial, decreasing solution to 
\[
-Q+\Delta Q + Q^3 = 0
\]
and plays a key role in the determination of sharp scattering/blowup results in that setting (see e.g. \cite{HR}). Fixing $\gamma_+>0$, we then define
\[
R_+(x) = Q(\tfrac{x}{\sqrt{\gamma_+}}),
\]
which solves
\begin{equation}\label{Relliptic}
-R_++\gamma_+\Delta R_++R_+^3 =0.
\end{equation}
In particular, $u(t,x)=e^{it}R_+(x)$ solves \eqref{E:DMNLS}--\eqref{E:dispersion} on $[0,t_+)$. 

We next define the \emph{mass}
\[
M(u) = \int_{\R^3} |u|^2\,dx 
\]
and the \emph{energies}
\[
E_\pm(u) = \int_{\R^3}  \frac{\gamma_\pm}{2} |\nabla u|^2 \mp \frac14 |u|^4\,dx. 
\]
We observe that solutions to \eqref{E:DMNLS}--\eqref{E:dispersion} conserve the mass, while neither of $E_\pm(u)$ is globally conserved.  Instead, we have conservation of $E_+$ on intervals $[n,n+t_+)$ and conservation of $E_-$ on intervals $[n+t_+,n+1)$, where $n\in\mathbb{Z}$.

Our result is the following:

\begin{theorem}\label{T:blowup}  Suppose that $u_0\in H^1(\R^3)$ satisfies
\begin{equation}\label{MER}
\begin{aligned}
M(u_0)E_+(u_0)&<M(R_+)E_+(R_+), \\
\|u_0\|_{L^2}\|\nabla u_0\|_{L^2}& \geq \|R_+\|_{L^2}\|\nabla R_+\|_{L^2}.
\end{aligned}
\end{equation}
Suppose further that either $xu_0\in L^2$ or $u_0$ is radial.  Then there exists $\lambda>0$ sufficiently large that the solution to \eqref{E:DMNLS}--\eqref{E:dispersion} with $u|_{t=0}=\lambda u_0(\lambda\cdot)$ blows up at some time $T\in(0,t_+)$. 
\end{theorem}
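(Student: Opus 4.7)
The plan is to run the Holmer-Roudenko virial/trapping argument on the first focusing step $[0,t_+)$, on which the equation reduces to
\[
i\partial_t u + \gamma_+\Delta u + |u|^2 u = 0 \quad \text{on } \R^3,
\]
a focusing cubic NLS whose ground state is precisely $R_+$ by \eqref{Relliptic}, and then exploit the spatial rescaling $u_0 \mapsto u_0^\lambda := \lambda u_0(\lambda\cdot)$ to push the resulting blowup time inside this interval. On $[0,t_+)$ both $M$ and $E_+$ are conserved. A first observation is that the two threshold quantities $M(u_0)E_+(u_0)$ and $\|u_0\|_{L^2}\|\nabla u_0\|_{L^2}$ are invariant under $u_0\mapsto u_0^\lambda$, so \eqref{MER} persists for every $\lambda>0$; on the other hand, $E_+(u_0^\lambda) = \lambda E_+(u_0)$, $M(u_0^\lambda) = \lambda^{-1}M(u_0)$, $\|x u_0^\lambda\|_{L^2}^2 = \lambda^{-3}\|xu_0\|_{L^2}^2$, and a change of variables shows the initial virial momentum scales like $\lambda^{-1}$.

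Next I would establish the standard trapping. The sharp Gagliardo-Nirenberg inequality $\|f\|_{L^4}^4 \leq C_{GN}\|f\|_{L^2}\|\nabla f\|_{L^2}^3$, combined with the Pohozaev identities for $R_+$ (which give $\gamma_+\|R_+\|_{L^2}^2\|\nabla R_+\|_{L^2}^2 = 6\,M(R_+)E_+(R_+)$), implies
\[
M(u)E_+(u) \geq g\bigl(\|u\|_{L^2}\|\nabla u\|_{L^2}\bigr), \qquad g(y) := \tfrac{\gamma_+}{2}y^2 - \tfrac{C_{GN}}{4}y^3,
\]
and $g$ is maximized at $y_+ := \|R_+\|_{L^2}\|\nabla R_+\|_{L^2}$ with value $M(R_+)E_+(R_+)$. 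Continuity of $y(t) := \|u(t)\|_{L^2}\|\nabla u(t)\|_{L^2}$ together with \eqref{MER} and conservation of mass and energy then traps $y(t) \geq y_2 > y_+$ throughout the $H^1$ interval of existence in $[0,t_+)$, where $y_2$ is the larger root of $g(y) = M(u_0^\lambda)E_+(u_0^\lambda)$. A direct computation of the virial $V(t) := \int_{\R^3} |x|^2 |u(t,x)|^2 dx$ for the equation above gives
\[
V''(t) = 24\gamma_+ E_+(u_0^\lambda) - 4\gamma_+^2 \|\nabla u(t)\|_{L^2}^2,
\]
and substituting $\|\nabla u(t)\|_{L^2}^2 \geq y_2^2/M(u_0^\lambda) \geq y_+^2/M(u_0^\lambda)$ together with the Pohozaev relation produces
\[
V''(t) \leq -\delta_0 \lambda, \qquad \delta_0 := \tfrac{24\gamma_+}{M(u_0)}\bigl[M(R_+)E_+(R_+) - M(u_0)E_+(u_0)\bigr] > 0,
\]
where the overall factor of $\lambda$ comes from the scaling of $M$ and $E_+$, while $\delta_0$ is strictly positive and independent of $\lambda$ by the scale invariance of the threshold quantities.

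The scaling now does the rest: integrating $V''(t) \leq -\delta_0\lambda$ twice gives
\[
V(t) \leq V(0) + V'(0)\,t - \tfrac{\delta_0\lambda}{2} t^2,
\]
and since $V(0)\lesssim \lambda^{-3}$ and $|V'(0)|\lesssim \lambda^{-1}$ while the quadratic coefficient is of order $\lambda$, the right-hand side vanishes at some time $T_\lambda \lesssim \lambda^{-2}$. Because $V(t)\geq 0$ for any $H^1$-solution, the solution cannot persist in $H^1$ past $T_\lambda$, so blowup occurs by $T_\lambda$; choosing $\lambda$ large enough forces $T_\lambda < t_+$.

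For the radial case (when $xu_0 \notin L^2$) I would replace $V$ by a localized virial $V_R(t) := \int \chi_R(x)|u(t,x)|^2\,dx$ with $\chi_R$ a smooth, radial, compactly supported approximation of $|x|^2$, and absorb the error terms in $V_R''$ via the radial Sobolev bound $\|u\|_{L^\infty(|x|\geq R)} \lesssim R^{-1}\|u\|_{H^1}$ in the spirit of Ogawa--Tsutsumi. Since $u_0^\lambda$ is concentrated on scale $\lambda^{-1}$, for $\lambda$ large the correction terms can be made small relative to the dominant $-\delta_0\lambda$ contribution by taking $R$ moderate. The main obstacle I anticipate is precisely this uniform-in-$\lambda$ control of the radial remainders while retaining the large negative lower bound on the virial Hessian; once one tracks that $\delta_0$ is strictly positive and scale-invariant whereas the rescaled data concentrates on an ever smaller scale, the argument closes as in the finite-variance case.
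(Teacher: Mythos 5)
Your finite-variance argument is correct and follows the same strategy as the paper: scale invariance of the two threshold quantities in \eqref{MER}, energy trapping via the sharp Gagliardo--Nirenberg inequality and the Pohozaev identities, and then the virial identity combined with the scalings $f_\lambda(0)\lesssim\lambda^{-3}$, $|f_\lambda'(0)|\lesssim\lambda^{-1}$, $f_\lambda''\leq-c\lambda$ to force the vanishing time of the quadratic majorant below $\lambda^{-2}t_+$. The one genuine (and pleasant) difference is how you obtain the uniform negativity of $f_\lambda''$: you substitute the trapped lower bound $\|\nabla u\|_{L^2}^2\geq y_+^2/M$ directly into $V''=24\gamma_+E_+-4\gamma_+^2\|\nabla u\|_{L^2}^2$ and read off $-\delta_0\lambda$ with $\delta_0$ proportional to the gap $M(R_+)E_+(R_+)-M(u_0)E_+(u_0)$, whereas the paper routes through the $\eps$-improved coercivity estimate \eqref{trp3} of Lemma~\ref{L:virial}. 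For finite variance your route is cleaner and needs less.

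The radial case is where your sketch has a real gap. First, ``taking $R$ moderate'' cannot mean $R$ independent of $\lambda$: the localized virial satisfies $|f_\lambda'(0)|\lesssim R$ (since $|\nabla w_R|\lesssim R$), so the positive root of $f_\lambda(0)+f_\lambda'(0)t-\tfrac{c\lambda}{2}t^2$ is of size $R/\lambda$, and this is $<\lambda^{-2}t_+$ only if $R\lesssim\lambda^{-1}$. The paper takes $R=C_0\lambda^{-1}$ with $C_0$ large, which is forced from both ends: the tail errors $R^{-2}\|u^\lambda\|_{L^2}^2\lesssim R^{-2}\lambda^{-1}$ and $R^{-4}\|u^\lambda\|_{L^2}^6\lesssim R^{-4}\lambda^{-3}$ are then of size $C_0^{-2}\lambda$ and $C_0^{-4}\lambda$, i.e.\ the \emph{same} order $\lambda$ as the main negative term, and are only beaten by choosing $C_0$ large. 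Second, the $L^4$ tail cannot be ``made small'': radial Sobolev gives $\|u^\lambda\|_{L^4(|x|>R)}^4\lesssim R^{-2}\|u^\lambda\|_{L^2}^3\|\nabla u^\lambda\|_{L^2}$, and $\|\nabla u^\lambda(t)\|_{L^2}$ is not bounded on the lifespan (it blows up). One must apply Young's inequality and absorb an $\eps\|\nabla u^\lambda\|_{L^2}^2$ term back into the virial Hessian; in your formulation this means sacrificing a small fraction of the $-4\gamma_+^2\|\nabla u\|_{L^2}^2$ term, which still closes because $\delta_0>0$ is strict, but this absorption step (the analogue of the $(1+\eps)$ in the paper's \eqref{trp3}) together with the precise choice $R\asymp\lambda^{-1}$ is the missing content, not merely an anticipated obstacle.
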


The condition \eqref{MER} would be a typical blowup condition for the cubic NLS
\begin{equation}\label{gamma+NLS}
i\partial_t u + \gamma_+ \Delta u + |u|^2 u = 0,
\end{equation}
with the proof following the standard virial argument (see e.g. \cite{Glassey, HR}).  In the present setting, the basic idea is to combine this argument with scaling to make the blowup happen before the end of the first focusing step (that is, before time $t_+$).

The key identity is the following standard virial identity:
\begin{equation}\label{E:virial}
\begin{aligned}
\tfrac{d^2}{dt^2}\int_{\R^3} |x|^2 |u(t,x)|^2\,dx & = \tfrac{d}{dt} \biggl[4\Im \int_{\R^3} \bar u \nabla u\cdot x\,dx\biggr] \\ 
\\ & = 8\int_{\R^3} \gamma_+ |\nabla u|^2 - \tfrac34 |u|^4\,dx
\end{aligned}
\end{equation}
for solutions to \eqref{gamma+NLS} (see e.g. \cite{HR} or \cite[Section~6.5]{Caz}).  The role of the condition \eqref{MER} is to guarantee that the right-hand side of \eqref{E:virial} is quantitatively negative throughout the lifespan of the solution:

\begin{lemma}\label{L:virial} Suppose $u_0\in H^1$ satisfies
\begin{align}
M(u_0)E_+(u_0)&\leq (1-\delta) M(R_+)E_+(R_+), \label{trp0} \\
\|u_0\|_{L^2}\|\nabla u_0\|_{L^2}& \geq \|R_+\|_{L^2}\|\nabla R_+\|_{L^2}\label{trp}
\end{align}
for some $\delta>0$. Let $u$ be the solution to \eqref{gamma+NLS} with $u|_{t=0}=u_0$.  Then there exists $\delta'=\delta'(\delta)>0$, $c=c(\delta)>0$, and $\eps=\eps(\delta)>0$ so that
\begin{equation}\label{trp2}
\|u(t)\|_{L^2}\|\nabla u(t)\|_{L^2}  \geq (1+\delta') \|R_+\|_{L^2}\|\nabla R_+\|_{L^2},
\end{equation}
and
\begin{equation}\label{trp3}
\int_{\R^3}\gamma_+(1+\eps)|\nabla u(t,x)|^2 - \tfrac{3}{4}|u(t,x)|^4\,dx < -c
\end{equation}
uniformly for $t$ in the lifespan of $u$.\end{lemma}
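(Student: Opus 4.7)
The plan is to reduce to the standard focusing cubic NLS on $\R^3$ by absorbing $\gamma_+$ into the spatial variable, and then run the (by now standard) Holmer--Roudenko energy-trapping argument. Setting $w(t,x) = u(t, \sqrt{\gamma_+}\,x)$ converts \eqref{gamma+NLS} into $i\partial_t w + \Delta w + |w|^2 w = 0$, and a short change-of-variables check shows that both
\[
\frac{M(u)E_+(u)}{M(R_+)E_+(R_+)} \qtq{and} \frac{\|u\|_{L^2}\|\nabla u\|_{L^2}}{\|R_+\|_{L^2}\|\nabla R_+\|_{L^2}}
\]
are scale-invariant, while the integrand in \eqref{trp3} is $\gamma_+^{3/2}\bigl[(1+\eps)\|\nabla w\|_{L^2}^2 - \tfrac34\|w\|_{L^4}^4\bigr]$. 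Thus it suffices to prove the lemma in the normalized case $\gamma_+ = 1$, $R_+ = Q$.

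The backbone is then the sharp Gagliardo--Nirenberg inequality $\|w\|_{L^4}^4 \leq C_{GN}\|w\|_{L^2}\|\nabla w\|_{L^2}^3$, with $C_{GN}$ attained by $Q$, combined with the Pohozaev identities for $Q$, which give $M(Q)E(Q) = \tfrac16 y_0^2$ for $y_0 := \|Q\|_{L^2}\|\nabla Q\|_{L^2}$. Writing $y(t) := \|w(t)\|_{L^2}\|\nabla w(t)\|_{L^2}$ and $g(y) := \tfrac12 y^2 - \tfrac14 C_{GN}\, y^3$, Gagliardo--Nirenberg combined with conservation of mass yields $M(w_0)E(w_0) \geq g(y(t))$ at every time in the lifespan, and $g$ attains its unique positive maximum at $y_0$ with $g(y_0) = M(Q)E(Q)$.

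With \eqref{trp0} and \eqref{trp}, continuity of $y(\cdot)$ and strict monotonicity of $g$ on $[y_0,\infty)$ push $y(0)$ past the unique $y_1 > y_0$ solving $g(y_1) = (1-\delta)g(y_0)$; energy conservation then propagates $y(t) \geq y_1 =: (1+\delta')y_0$ throughout the lifespan, giving \eqref{trp2}. For \eqref{trp3}, I would substitute $\|w\|_{L^4}^4 = 2\|\nabla w\|_{L^2}^2 - 4E(w_0)$ (directly from the definition of $E$) to rewrite the integrand as $(\eps-\tfrac12)\|\nabla w\|_{L^2}^2 + 3E(w_0)$, then multiply by $M(w_0)$ and plug in the bounds $M(w_0)\|\nabla w(t)\|_{L^2}^2 \geq (1+\delta')^2 y_0^2$ and $M(w_0)E(w_0) \leq \tfrac{1-\delta}{6} y_0^2$; the resulting coefficient $(\eps-\tfrac12)(1+\delta')^2 + \tfrac{1-\delta}{2}$ is strictly negative for $\eps = \eps(\delta)$ small since $(1+\delta')^2 > 1 > 1-\delta$.

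The main obstacle is the trapping step --- correctly extracting a quantitative gap $\delta' = \delta'(\delta) > 0$ from the strict inequality $g(y_1) = (1-\delta)g(y_0)$ with $y_1 > y_0$ and then propagating $y(t) \geq y_1$ using only continuity of $y$ (and not any monotonicity in $t$). Once the scaling reduction and the sharp Pohozaev constants are nailed down, the rest is algebra and a routine application of Gagliardo--Nirenberg.
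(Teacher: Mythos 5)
Your proposal is correct and follows essentially the same route as the paper: sharp Gagliardo--Nirenberg with the ground state as optimizer, the Pohozaev identities, the resulting cubic inequality in $y(t)=\|u(t)\|_{L^2}\|\nabla u(t)\|_{L^2}$ plus a continuity argument for \eqref{trp2}, and the same substitution of the conserved energy to rewrite the integrand for \eqref{trp3}. The only (cosmetic) difference is that you first rescale to $\gamma_+=1$ and work with $Q$, whereas the paper works directly with $R_+$ and derives its Pohozaev identities from \eqref{Relliptic}.
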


\begin{proof} This is the standard `energy trapping' argument (see e.g. \cite[Theorem~4.2]{HR}).  The key observation is that the ground state $Q$ (and hence the rescaled ground state $R_+(\cdot)=Q(\tfrac{\cdot}{\sqrt{\gamma_+}}))$ is an optimizer for the sharp Gagliardo--Nirenberg inequality
\[
\|u\|_{L^4(\R^3)}^4 \leq C_0 \|u\|_{L^2(\R^3)}\|\nabla u\|_{L^2(\R^3)}^3,
\]
so that in particular
\begin{equation}\label{defC0}
C_0=\frac{\|R_+\|_{L^4}^4}{\|R_+\|_{L^2}\|\nabla R_+\|_{L^2}^3}.
\end{equation}
We can connect the various norms of $R_+$ to one another via the following Pohozaev identities (obtained by multiplying \eqref{Relliptic} by $R_+$ and $x\cdot\nabla R_+$ and integrating):
\begin{align*}
-\|R_+\|_{L^2}^2-\gamma_+\|\nabla R_+\|_{L^2}^2 +\|R_+\|_{L^4}^4 =0,& \\
\tfrac32\|R_+\|_{L^2}^2 + \tfrac{\gamma_+}{2}\|\nabla R_+\|_{L^2}^2 - \tfrac34\|R_+\|_{L^4}^4=0.&
\end{align*}
In particular,
\begin{equation}\label{E+R+}
\tfrac{\gamma_+}{3}\|\nabla R_+\|_{L^2}^2 = \tfrac{1}{4}\|R_+\|_{L^4}^4,\qtq{so that}E_+(R_+)=\tfrac{\gamma_+}{6}\|\nabla R_+\|_{L^2}^2. 
\end{equation}

We can therefore use the Gagliardo--Nirenberg inequality to obtain
\begin{align*}
(1-\delta)M(R_+)E_+(R_+) & \geq M(u)E_+(u) \\
& \geq \tfrac{\gamma_+}2 \|u(t)\|_{L^2}^2\|\nabla u(t)\|_{L^2}^2 -\tfrac{C_0}{4}\|u(t)\|_{L^2}^3\|\nabla u(t)\|_{L^2}^3,
\end{align*}
which, using \eqref{defC0} and \eqref{E+R+}, implies
\[
(1-\delta)\geq 3\biggl[\frac{\|u(t)\|_{L^2}\|\nabla u(t)\|_{L^2}}{\|R_+\|_{L^2}\|\nabla R_+\|_{L^2}}\biggr]^2 - 2\biggl[\frac{\|u(t)\|_{L^2}\|\nabla u(t)\|_{L^2}}{\|R_+\|_{L^2}\|\nabla R_+\|_{L^2}}\biggr]^3
\]
Thus, by \eqref{trp} and a continuity argument, we deduce that \eqref{trp2} holds.  

For \eqref{trp3}, we use \eqref{E+R+}, \eqref{trp0}, and \eqref{trp2} to write
\begin{align*}
\int \gamma_+ (1+\eps)|\nabla u|^2 - \tfrac{3}{4}|u|^4 \,dx & = \tfrac{1}{M(u)}\bigl[3M(u)E_+(u)-\gamma_+(\tfrac12-\eps)\|u(t)\|_{L^2}^2\|\nabla u(t)\|_{L^2}^2\bigr] \\
& \leq \frac{\gamma_+\|R_+\|_{L^2}^2\|\nabla R_+\|_{L^2}^2}{2M(u)}\bigl[(1-\delta)-(1+\delta')^2(1-2\eps)\bigr] \\
& = -\frac{\gamma_+\|R_+\|_{L^2}^2\|\nabla R_+\|_{L^2}^2}{2M(u)}\bigl[\delta+2\delta'+(\delta')^2-\eps(1+\delta')^2],
\end{align*}
which, choosing $\eps$ sufficiently small, yields the result. \end{proof}

We turn to the proof of Theorem~\ref{T:blowup}. 

\begin{proof}[Proof of Theorem~\ref{T:blowup}]  We take $u_0\in H^1$ such that \eqref{MER} holds.  

We first consider the case $xu_0\in L^2$. We let $u$ denote the maximal-lifespan solution to \eqref{E:DMNLS}--\eqref{E:dispersion} with initial data $u_0$, and for $\lambda\geq 1$ we let $u^\lambda$ denote the maximal-lifespan solution with initial data $u_0^\lambda:=\lambda u_0(\lambda\cdot)$.  We let $I_\lambda$ denote the intersection of the lifespan of $u^\lambda$ with $[0,\lambda^{-2}t_+]$.  In particular,
\[
u^\lambda(t,x)=\lambda u(\lambda^2 t,\lambda x)
\]
for all $t\in I_\lambda$.  Thus, by Lemma~\ref{L:virial} and scaling, there exists $c>0$ so that 
\begin{equation}\label{virialbd}
\int_{\R^3} \gamma_+ |\nabla u^\lambda(t,x)|^2 - \tfrac{3}{4}|u^\lambda(t,x)|^4\,dx < -\lambda c 
\end{equation}
uniformly for $t\in I_{\lambda}$ (here the $\eps$ improvement in Lemma~\ref{L:virial} is not needed). 

We now set
\[
f_\lambda(t)=\int_{\R^3} |x|^2 |u^\lambda(t,x)|^2\,dx\qtq{for}t\in I_\lambda
\]
and write
\begin{equation}\label{expandft}
f_\lambda(t)=f_\lambda(0)+tf_\lambda'(0)+ \int_0^t \int_0^s f_{\lambda}''(\tau)\,d\tau\,ds.
\end{equation}
We observe (cf. \eqref{E:virial} and \eqref{virialbd}) that
\begin{equation}\label{ready-to-blow}
\begin{aligned}
&|f_\lambda(0)| = C_1\lambda^{-3}, \\
&|f_\lambda'(0)|  = \biggl|4\gamma_+\Im\int_{\R^3} \bar u_0^\lambda \nabla u_0^\lambda \cdot x\,dx\biggr| \leq C_2\lambda^{-1}, \\
&f_\lambda''(\tau)  < -c\lambda,
\end{aligned}
\end{equation}
uniformly for $\tau\in I_\lambda$, where $C_1:=\|xu_0\|_{L^2}^2$ and $C_2:=4\|xu_0\|_{L^2}\|\nabla u_0\|_{L^2}$.  We therefore obtain 
\[
0\leq f_\lambda(t) \leq -\tfrac12c\lambda t^2 + C_2\lambda^{-1}t+C_1\lambda^{-3}\qtq{for}t\in I_\lambda.
\]
We now observe that the quadratic polynomial on the right-hand side equals zero at time
\[
T_\lambda:=\frac{C_2+\sqrt{C_2^2+2cC_1}}{c\lambda^2}.
\]
Choosing $\lambda$ large enough that $T_{\lambda}<t_+$, we therefore obtain that the solution $u^\lambda$ blows up at or before time $T_\lambda$.

We next consider the case that $u_0$ (and hence the solution $u$) is radial.  In this case, we use a localized version of the virial identity.  As before, we consider the rescaled solution $u^\lambda$ on $I_\lambda$.  We then introduce a weight $w_R(x)=R^2\phi(\tfrac{x}{R})$, where $\phi$ is a smooth, nonnegative, radial function satisfying
\[
\phi(x)=\begin{cases} |x|^2 & |x|\leq 1 \\ \text{constant} & |x|\geq 3\end{cases}
\]
and obeying the bounds
\[
|\nabla\phi(x)|\leq 2|x| \qtq{and}|\partial_{jk}\phi(x)|\leq 2 \qtq{for all}x\in\R^3. 
\]
We will later specialize to $R\sim \lambda^{-1}$.

Proceeding as above, we define
\[
f_\lambda(t) = \int w_R(x)|u^\lambda(t,x)|^2\,dx>0
\]
and use the expansion \eqref{expandft}.  Using the fact that
\[
f_\lambda'(t) = 4\gamma_+\Im\int_{\R^3} \bar u^\lambda \nabla u^\lambda\cdot\nabla(w_R)\,dx,
\]
we first derive the bounds
\begin{equation}\label{radialf}
|f_\lambda(0)| \lesssim R^2\lambda^{-1} \qtq{and} |f_\lambda'(0)| \lesssim R. 
\end{equation}

Computing the second derivative of $f_\lambda$, we derive the following analogue of \eqref{E:virial}:
\begin{align}
f_\lambda''(t) & = 8\int \gamma_+ |\nabla u^\lambda|^2 - \tfrac34 |u^\lambda|^4 \,dx \label{virial-1} \\
&\quad + 4\gamma_+\Re\int_{|x|>R}  (\partial_j \bar u^\lambda)( \partial_k u^\lambda)\partial_{jk}[w_R]\,dx - 8 \int_{|x|>R}|\nabla u^\lambda|^2 \,dx \label{virial-2}\\
&\quad +\mathcal{O}\biggl[ \int_{|x|>R}R^{-2}|u^\lambda|^2 + |u^\lambda|^4 \,dx \biggr] \label{virial-3} 
\end{align}
(see e.g. \cite[Section~4]{HR}). 

Now, \eqref{virial-2}$\leq 0$ by the Cauchy--Schwarz inequality and the assumptions on $w_R$, while for the first term in \eqref{virial-3}, we use conservation of mass to obtain
\[
R^{-2}\|u^\lambda\|_{L^2}^2 \lesssim R^{-2}\lambda^{-1} \qtq{uniformly on}I_\lambda. 
\] 
For the remaining term, we use the radial Sobolev embedding estimate, Young's inequality, and the conservation of mass to obtain
\begin{align*}
\|u^\lambda\|_{L^4(|x|>R)}^4 & \leq R^{-2} \|u^\lambda\|_{L^2}^2 \| |x|u^\lambda \|_{L^\infty}^2 \\
& \leq CR^{-2} \|u^\lambda\|_{L^2}^3 \|\nabla u^\lambda\|_{L^2} \\
& \leq 8\eps \gamma_+ \|\nabla u^\lambda\|_{L^2}^2  + \tfrac{C}{\eps\gamma_+} R^{-4} \|u^\lambda\|_{L^2}^6 \\
& \leq 8\eps\gamma_+\|\nabla u^\lambda\|_{L^2}^2 + \tfrac{C}{\eps\gamma_+} R^{-4}\lambda^{-3},
\end{align*}
where we have allowed the constant $C$ to change in each line and $\eps$ is as in Lemma~\ref{L:virial}. Thus, continuing from above and using Lemma~\ref{L:virial}, we obtain
\begin{align*}
f_\lambda''(t) &\leq 8\int (1+\eps)\gamma_+|\nabla u^\lambda|^2 - \tfrac34 |u^\lambda|^4\,dx + CR^{-2}\lambda^{-1}+\tfrac{C}{\eps\gamma_+}R^{-4}\lambda^{-3} \\
& \leq -2c\lambda + CR^{-2}\lambda^{-1}+\tfrac{C}{\eps\gamma_+}R^{-4}\lambda^{-3}
\end{align*}
for some $c>0$. In particular, choosing
\[
R=C_0\lambda^{-1} \qtq{for sufficiently large} C_0=C_0(c,\eps,\gamma_+)
\]
and recalling \eqref{radialf}, we derive
\[
|f_\lambda(0)|\leq C_1\lambda^{-3},\quad |f_\lambda'(0)|\leq C_2\lambda^{-1},\qtq{and} f_\lambda''(t) \leq -c\lambda
\]
for some $c,C_1,C_2>0$, uniformly on $I_\lambda$.  As this puts us in exactly the same situation as \eqref{ready-to-blow} above, we deduce that for sufficiently large $\lambda$, we obtain blowup before time $t_+$. \end{proof}

\subsection*{Acknowledgements} We are grateful to Rowan Killip for helpful suggestions regarding the global Strichartz estimate. J.M. was supported by a Simons Collaboration Grant.

\end{document}